\newcommand*{\mailto}[1]{\href{mailto:#1}{\nolinkurl{#1}}}
\newcommand{\cH}{{\mathcal H}}
\newcommand{\cK}{{\mathcal K}}
\DeclareMathOperator{\ran}{ran}
\DeclareMathOperator{\dom}{dom}
\newcommand{\no}{\notag}
\newcommand{\lb}{\label}
\newcommand{\ol}{\overline}
\newcommand{\bi}{\bibitem}
\let\geq\geqslant
\def\theequation{\@arabic\c@equation}
\numberwithin{equation}{section}
\newtheorem{theorem}{Theorem}[section]
\newtheorem{proposition}[theorem]{Proposition}
\theoremstyle{remark}
\newtheorem{remark}[theorem]{Remark}
\begin{document}

\numberwithin{equation}{section}
\allowdisplaybreaks

\title[Some Remarks on the Operator $T^*T$]{Some Remarks on the Operator $T^*T$} 
    
\author[F.\ Gesztesy]{Fritz Gesztesy}
\address{Department of Mathematics, 
Baylor University, One Bear Place \#97328,
Waco, TX 76798-7328, USA}
\email{\mailto{Fritz\_Gesztesy@baylor.edu}}
\urladdr{\url{http://www.baylor.edu/math/index.php?id=935340}}

\author[K.\ Schm\"udgen]{Konrad Schm\"udgen}
\address{Department of Mathematics,
University of Leipzig Leipzig, Germany}
\email{\mailto{Konrad.Schmuedgen@math.uni-leipzig.de}}
\urladdr{\url{http://www.math.uni-leipzig.de/~schmuedgen/}}



\date{\today}
\subjclass[2010]{Primary 47B25; Secondary 47B65}
\keywords{Von Neumann's Theorem, Friedrichs extension, $T^*T$.}

\begin{abstract} 
This note deals with the operator $T^*T$, where $T$ is a densely defined operator 
on a complex Hilbert space. We reprove a recent result of Z.\ Sebesty\'en and Zs.\ Tarcsay \cite{ST14}: If $T^*T$ and $TT^*$ are self-adjoint, then $T$ is closed. In addition, we describe the Friedrichs extension of $S^2$, where $S$ is a symmetric operator, recovering results due to Yu. Arlinski{\u i} 
and Yu.\ Kovalev \cite{AK11}, \cite{AK13}.
\end{abstract}

\maketitle 


\section{Introduction}  \lb{s1}

Let $T$ be a densely defined closed linear operator on a Hilbert space $\cH_1$ into another Hilbert space $\cH_2$. The operator $T^*T$ is an important technical tool in  operator theory. Basic facts  such as, $T^*T$ is a nonnegative self-adjoint  operator in $\cH_1$, and $T^*T + I_{\cH_1}$ is a bijection of 
$\cH_1$ onto itself, are usually proved at the beginning of treatments of unbounded operators on Hilbert spaces. 

In the present note we discuss two topics concerning the operator $T^*T$.  In Section \ref{s2} we  provide an alternative short proof of a converse to a theorem of von Neumann 
\cite[Satz~3]{Ne32} obtained by Z.\ Sebesty\'en and Zs.\ Tarcsay in 2014 \cite[Theorem~2.1]{ST14}. In Section \ref{s3} we consider a symmetric operator $S$ on a Hilbert space such that  $\dom \big(S^2\big)$ is dense. We  determine the Friedrichs extension $\big(S^2\big)_F$ of the nonnegative symmetric operator $S^2$ and discuss when this Friedrichs extension equals the operator $S^*\ol{S}$, recovering a result due to Yu. Arlinski{\u i} and Yu.\ Kovalev \cite{AK11}, ]\cite{AK13}.

Throughout this note, the symbols $\cH$, $\cH_1$, and $\cH_2$ denote complex Hilbert spaces and 
$(\, \cdot \,, \, \cdot \,)_\cH$ denotes the scalar product of $\cH$. By a {\it Hermitian} operator in this note we mean a (not necessarily densely defined) linear operator $T$ on $\cH$ such that 
$(Tf,g)_{\cH}=(f,Tg)_{\cH}$ for all $f,g \in \dom (T)$. In the case where $S$ is Hermitian and densely defined, we call $S$ {\it symmetric}. Clearly, $S$ is symmetric if and only if $S \subseteq S^*$.

\section{On a Converse to a Theorem of von Neumann Due to \\ Z.\ Sebesty\'en and Zs.\ Tarcsay} \lb{s2}

In this section we provide a short proof of a converse to a theorem of von Neumann 
\cite[Satz~3]{Ne32} obtained by Z.\ Sebesty\'en and Zs.\ Tarcsay in 2014 \cite[Theorem~2.1]{ST14}.

Von Neumann's theorem \cite[Satz~3]{Ne32}, in a two-Hilbert space formulation as in 
\cite[Theorem~V.3.24]{Ka80} (see also \cite[p.~1245--1247]{DS88}, 
\cite[Theorem~3.1]{EE89}, \cite[Theorem~X.25]{RS75}, 
\cite[p.~312--313]{RS90}, \cite[Proposition~3.18]{Sc12}, \cite[Theorem~5.39]{We80}) reads as follows:

\begin{theorem} \lb{t2.1}
Let $T$ be a closed and densely defined linear operator of $\cH_1$ into $\cH_2$. Then $T^*T$ is a nonnegative and self-adjoint operator in $\cH_1$. 
\end{theorem}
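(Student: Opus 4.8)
The plan is to carry out the classical argument, in which the closedness of $T$ enters exactly once, through von Neumann's orthogonal decomposition of the graph; I would organize it in three steps.

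First I would dispose of the purely algebraic facts, none of which uses closedness. On its natural domain $\dom(T^*T)=\{f\in\dom(T):Tf\in\dom(T^*)\}$ the operator $T^*T$ satisfies $(T^*Tf,g)_{\cH_1}=(Tf,Tg)_{\cH_2}=(f,T^*Tg)_{\cH_1}$ for $f,g\in\dom(T^*T)$, so it is Hermitian, and $(T^*Tf,f)_{\cH_1}=\norm{Tf}_{\cH_2}^2\ge 0$, so it is nonnegative. Expanding $\norm{(I_{\cH_1}+T^*T)f}_{\cH_1}^2=\norm{f}_{\cH_1}^2+2\norm{Tf}_{\cH_2}^2+\norm{T^*Tf}_{\cH_1}^2$ gives $\norm{(I_{\cH_1}+T^*T)f}_{\cH_1}\ge\norm{f}_{\cH_1}$, so $I_{\cH_1}+T^*T$ is injective and bounded below by $1$.

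Second --- and this is the heart of the matter --- I would prove that $\ran(I_{\cH_1}+T^*T)$ is all of $\cH_1$. Since $T$ is closed, its graph $\G(T)=\{(f,Tf):f\in\dom(T)\}$ is a closed subspace of $\cH_1\oplus\cH_2$. Let $W:\cH_1\oplus\cH_2\to\cH_2\oplus\cH_1$ be the unitary $W(f,g)=(g,-f)$. A short computation, using the density of $\dom(T)$, shows $(g,h)\perp W\,\G(T)$ if and only if $g\in\dom(T^*)$ and $T^*g=h$; hence $\big(W\,\G(T)\big)^{\perp}=\G(T^*)$, and since $W\,\G(T)$ is closed one obtains the orthogonal decomposition $\cH_2\oplus\cH_1=W\,\G(T)\oplus\G(T^*)$. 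Given $k\in\cH_1$, decomposing $(0,-k)$ accordingly yields $f\in\dom(T)$ and $g\in\dom(T^*)$ with $Tf+g=0$ and $-f+T^*g=-k$; then $Tf=-g\in\dom(T^*)$, so $f\in\dom(T^*T)$, and $(I_{\cH_1}+T^*T)f=k$. Thus $I_{\cH_1}+T^*T$ maps $\dom(T^*T)$ bijectively onto $\cH_1$.

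Finally I would assemble the conclusion. The inverse $B:=(I_{\cH_1}+T^*T)^{-1}$ is then everywhere defined on $\cH_1$, bounded (indeed contractive), and Hermitian --- writing $k=(I_{\cH_1}+T^*T)f$ and $k'=(I_{\cH_1}+T^*T)f'$ one gets $(Bk,k')_{\cH_1}=(f,f')_{\cH_1}+(Tf,Tf')_{\cH_2}=(k,Bk')_{\cH_1}$ --- hence self-adjoint, $B=B^*$. Since $B$ is injective, $\dom(T^*T)=\ran B$ is dense, so $T^*T$ is symmetric; and since the inverse of an injective self-adjoint operator is again self-adjoint, $I_{\cH_1}+T^*T=B^{-1}$, and therefore $T^*T$ itself, is self-adjoint. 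I expect the one genuine obstacle to be the graph decomposition in the second step: identifying $\big(W\,\G(T)\big)^{\perp}$ with $\G(T^*)$ and, above all, using closedness of $T$ to know $W\,\G(T)$ is closed, so that together with its orthogonal complement it exhausts $\cH_2\oplus\cH_1$. Without closedness this fails --- $I_{\cH_1}+T^*T$ need not be surjective --- which is precisely what makes the converse direction treated below noteworthy.
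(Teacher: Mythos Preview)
Your argument is correct and is precisely the classical von Neumann proof: the graph decomposition $\cH_2\oplus\cH_1 = W\,\G(T)\oplus\G(T^*)$ (using closedness of $T$) yields surjectivity of $I_{\cH_1}+T^*T$, whence $B=(I_{\cH_1}+T^*T)^{-1}$ is a bounded, everywhere-defined, symmetric (hence self-adjoint) operator, and inversion gives the result.

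Note, however, that the paper does not supply its own proof of Theorem~\ref{t2.1}. It is quoted as a classical result with references to the standard sources \cite{Ne32}, \cite[Theorem~V.3.24]{Ka80}, \cite{DS88}, \cite{EE89}, \cite{RS75}, \cite{RS90}, \cite{Sc12}, \cite{We80}; the paper's actual contribution in Section~\ref{s2} is the short proof of the \emph{converse}, Theorem~\ref{t2.2}, via the block matrix $Q=\begin{pmatrix}0&T^*\\T&0\end{pmatrix}$. Your write-up is exactly the textbook argument one finds in those references (e.g., Kato or Weidmann), so there is nothing to compare in terms of strategy.
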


It is well-known (cf.\ \cite[Theorem~V.3.24]{Ka80} or \cite[Example~VI.2.13]{Ka80}) that under the hypotheses of Theorem \ref{t2.1} one also has that $\dom(T^*T)$ is a core for $T$.

The converse of von Neumann's theorem obtained by Z.\ Sebesty\'en and Zs.\ Tarcsay  
\cite[Theorem~2.1]{ST14} in 2014 is of the form:

\begin{theorem} \lb{t2.2}
Let  $T$ be a  densely defined  linear operator of $\cH_1$ into $\cH_2$. In addition, suppose that $T^*T$ and $TT^*$ are self-adjoint in $\cH_1$ and $\cH_2$, respectively. Then $T$ is closed. 
\end{theorem}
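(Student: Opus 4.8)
The plan is to show that $T$ equals its double adjoint $T^{**} = \overline{T}$, which is equivalent to closedness. The natural strategy exploits the symmetry of the hypotheses under $T \leftrightarrow T^*$: since $T^*$ is automatically closed, von Neumann's theorem (Theorem \ref{t2.1}) applies to $T^*$, so $(T^*)^*(T^*) = T^{**}T^*$ is nonnegative and self-adjoint, and $T^{**}T^* + I_{\cH_2}$ is a bijection of $\cH_2$ onto itself. The hypothesis tells us that $TT^*$ is also self-adjoint, with $T \subseteq T^{**}$ giving $TT^* \subseteq T^{**}T^*$. Two self-adjoint operators, one contained in the other, must coincide (a self-adjoint operator is maximally symmetric). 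Hence $TT^* = T^{**}T^*$, and in particular $\dom(TT^*) = \dom(T^{**}T^*)$.

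Next I would leverage this equality of domains together with the other hypothesis. From $TT^* = T^{**}T^*$ we get that $T^{**}T^* + I_{\cH_2}$ maps $\dom(TT^*)$ bijectively onto $\cH_2$, and it agrees there with $TT^* + I_{\cH_2}$. Now I want to upgrade this to a statement about $T$ itself, not $TT^*$. The key computational fact is the graph-norm identity: for $g \in \dom(TT^*) \subseteq \dom(T^*)$,
\[
\Norm{(TT^* + I_{\cH_2})^{1/2} g}_{\cH_2}^2 = \Norm{T^* g}_{\cH_1}^2 + \Norm{g}_{\cH_2}^2,
\]
and similarly with $T^{**}$ in place of $T$. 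Since $T^{**}T^* + I$ is a bijection with self-adjoint square root, $\dom\big((T^{**}T^* + I)^{1/2}\big) = \dom(T^*)$; I would argue the same holds on the $T$-side, so that $\dom(T^*)$ — viewed with the $T^*$-graph norm — is unchanged whether we build it from $T$ or from $T^{**}$. The upshot I am after is that $T$ and $T^{**}$ induce the same bounded operator from $\dom(T^*)$ (graph norm) into $\cH_1$ in the appropriate weak sense, forcing $Tf = T^{**}f$ for all relevant vectors and then $\dom(T) = \dom(T^{**})$.

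More directly, I expect the cleanest route is: show $\dom(T) = \dom(T^{**})$ by a closed-graph / bijectivity argument. Take any $u \in \dom(T^{**})$. Since $TT^* + I_{\cH_2} = T^{**}T^* + I_{\cH_2}$ is onto $\cH_2$, and using that $T^{**} = (T^*)^*$ so that $(T^{**}f, h)_{\cH_2} = (f, T^* h)_{\cH_1}$ for $f \in \dom(T^{**})$, $h \in \dom(T^*)$, I would solve $(TT^* + I)g = \text{(something built from } u\text{)}$ and recognize the solution as exhibiting $u \in \dom(T)$ via the first representation theorem for the quadratic form $h \mapsto \Norm{T^* h}^2$. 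The main obstacle is precisely this last identification: converting the operator equality $TT^* = T^{**}T^*$ back into the domain equality $\dom(T) = \dom(T^{**})$. The subtlety is that knowing two operators $A_1 := TT^*$ and $A_2 := T^{**}T^*$ coincide does not immediately "factor through" to $T = T^{**}$, because the factorization $A = (\,\cdot\,)^*(\,\cdot\,)$ is not injective in an obvious way; one must use that $T^*$ is \emph{fixed} on both sides and that $\ran(T^*)$ together with $\ker(T^*) = \ran(T^{**})^\perp$ spans enough of $\cH_1$. I would handle this by decomposing $\cH_1 = \overline{\ran(T^*)} \oplus \ker(T^{**})$ and checking $Tf = T^{**}f$ on each piece — on $\ker(T^{**})$ both sides, suitably interpreted, should vanish or reduce to the bounded part, while on $\overline{\ran(T^*)}$ the equality $TT^* = T^{**}T^*$ does the work after taking closures in the graph norm.
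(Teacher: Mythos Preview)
Your opening move is correct and genuinely useful: since $T^*$ is closed and densely defined, von Neumann's theorem gives that $T^{**}T^*$ is self-adjoint, and the inclusion $TT^* \subseteq T^{**}T^*$ between two self-adjoint operators forces $TT^* = T^{**}T^*$. The symmetric argument (which you allude to but never actually run) likewise yields $T^*T = T^*T^{**}$. So far so good.

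The gap is exactly where you flag it, and your attempts to close it do not work. Knowing $TT^* = T^{**}T^*$ and $T^*T = T^*T^{**}$ tells you that $T$ and $T^{**}$ agree on $T^*\dom(TT^*)$ and that $\dom(T^*T)$ is a core for $T^{**}$; but a core for $T^{**}$ sitting inside $\dom(T)$ does \emph{not} force $\dom(T)=\dom(T^{**})$ unless you already know $T$ is closed, which is the point at issue. Your proposed decomposition $\cH_1 = \overline{\ran(T^*)} \oplus \ker(T^{**})$ does not help: vectors in $\ker(T^{**})$ need not lie in $\dom(T)$, so ``both sides'' cannot be compared there, and on $\overline{\ran(T^*)}$ you only control $T$ on the (non-dense in graph norm, a priori) set $T^*\dom(TT^*)$. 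The form-representation remarks are likewise circular: identifying $\dom\big((TT^*+I)^{1/2}\big)$ with $\dom(T^*)$ says something about $T^*$, not about $\dom(T)$ versus $\dom(T^{**})$. In short, you have reduced the problem to ``cancel $T^*$ on the right'' and ``cancel $T^*$ on the left,'' and neither cancellation is justified.

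For contrast, the paper bypasses this factorization difficulty entirely by packaging both hypotheses into a single symmetric block operator
\[
Q=\begin{pmatrix}0 & T^*\\ T & 0\end{pmatrix}\ \text{on}\ \cH_1\oplus\cH_2,\qquad
Q^2=\begin{pmatrix}T^*T & 0\\ 0 & TT^*\end{pmatrix}.
\]
Self-adjointness of $T^*T$ and $TT^*$ makes $Q^2$ self-adjoint, hence $Q^2+I=(Q+iI)(Q-iI)=(Q-iI)(Q+iI)$ is onto, so both deficiency spaces of $Q$ vanish and $Q$ is self-adjoint. Comparing $Q$ with $\overline{Q}$ (whose lower-left entry is $\overline{T}$) gives $T=\overline{T}$ immediately. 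The trick is that squaring $Q$ uses \emph{both} products $T^*T$ and $TT^*$ simultaneously and converts the problem into a deficiency-index computation, avoiding any need to ``un-factor'' an equality of the type $AB=CB$.
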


Combining Theorems \ref{t2.1} and \ref{t2.2} then yields the following fact (cf.\ 
\cite[Theorem~3.2]{ST14}):

\begin{theorem} \lb{t2.3}
Let $T$ be a densely defined linear operator of $\cH_1$ into $\cH_2$. Then the following assertions $(i)$ and $(ii)$ are equivalent: \\[1mm]
$(i)$ $T$ is closed. \\[1mm]
$(ii)$ $T^*T$ and $TT^*$ are $($nonnegative\,$)$ self-adjoint operators in $\cH_1$ and $\cH_2$, respectively. 
\end{theorem}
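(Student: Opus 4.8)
The plan is to obtain Theorem~\ref{t2.3} by simply combining the two preceding results; the only point that needs a word of justification is the verification that the adjoint $T^*$ is itself densely defined and closed, so that von Neumann's theorem (Theorem~\ref{t2.1}) may legitimately be applied to $T^*$ as well as to $T$.

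First I would prove the implication $(i)\Rightarrow(ii)$. Assume $T$ is closed and densely defined. Since $T$ is densely defined, $T^*$ exists and is closed. Since $T$ is closed it is in particular closable, and hence $T^*$ is densely defined with $T^{**}=\ol{T}=T$. Applying Theorem~\ref{t2.1} to $T$ itself shows that $T^*T$ is a nonnegative self-adjoint operator in $\cH_1$. Applying the same theorem to the closed, densely defined operator $T^*$ of $\cH_2$ into $\cH_1$, in the role of ``$T$'', shows that $(T^*)^*(T^*)=T^{**}T^*=TT^*$ is a nonnegative self-adjoint operator in $\cH_2$. This establishes $(ii)$.

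The converse implication $(ii)\Rightarrow(i)$ is precisely the content of Theorem~\ref{t2.2}: if $T$ is densely defined and both $T^*T$ and $TT^*$ are self-adjoint, then $T$ is closed. Combining the two implications yields the asserted equivalence of $(i)$ and $(ii)$.

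The only (very mild) obstacle here is the bookkeeping about adjoints — namely that a closed, densely defined operator $T$ automatically has a densely defined (hence closed) adjoint $T^*$, so that Theorem~\ref{t2.1} is available for $T^*$ as well as for $T$. Beyond that, no genuinely new argument is required for Theorem~\ref{t2.3} once Theorems~\ref{t2.1} and \ref{t2.2} are in hand.
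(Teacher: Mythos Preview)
Your proposal is correct and follows exactly the approach indicated in the paper, which simply states that Theorem~\ref{t2.3} is obtained by combining Theorems~\ref{t2.1} and \ref{t2.2}. In fact, you supply more detail than the paper does, carefully justifying that $T^*$ is densely defined and closed so that Theorem~\ref{t2.1} applies to it; the paper leaves this implicit.
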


Next, we present a short proof of Theorem \ref{t2.2} on the basis of an elegant unpublished approach due to Ed Nelson. (For applications of this approach see, for instance, \cite{EGNT14}, \cite{Th88}, 
\cite[Chs.~5, 6]{Th92}, and the references cited therein.)

\begin{proof}[Proof of Theorem~\ref{t2.2}]
Introduce in $\cH_1 \oplus \cH_2$ the $2 \times 2$ block matrix operator
\begin{equation}
Q = \begin{pmatrix} 0 & T^* \\ T & 0 \end{pmatrix}, \quad \dom(Q) = \dom(T) \oplus \dom(T^*). 
\lb{2.1} 
\end{equation}
then (cf.\ also \cite[Proposition~2.6.3]{Tr08}),
\begin{equation}
Q^* = \begin{pmatrix} 0 & T^* \\ \ol{T} & 0 \end{pmatrix}, 
\quad \dom(Q^*) = \dom\big(\ol{T}\big) \oplus \dom(T^*),      \lb{2.2}
\end{equation}
and hence $Q$ is symmetric in $\cH_1 \oplus \cH_2$, and 
\begin{equation}
\ol{Q} = (Q^*)^* = \begin{pmatrix} 0 & \big(\ol{T}\big)^* \\ (T^*)^* & 0 \end{pmatrix} 
= \begin{pmatrix} 0 & T^* \\ \ol{T} & 0 \end{pmatrix} , 
\quad \dom\big(\ol{Q}\big) = \dom\big(\ol{T}\big) \oplus \dom(T^*).     \lb{2.3}
\end{equation} 
By hypothesis,
\begin{equation}
Q^2 = \begin{pmatrix} T^*T & 0 \\ 0 & TT^* \end{pmatrix} \geq 0, 
\quad \dom\big(Q^2\big) = \dom(T^*T) \oplus \dom(TT^*),       \lb{2.4}
\end{equation}
is self-adjoint in $\cH_1 \oplus \cH_2$. Hence, 
\begin{align}
\cH_1 \oplus \cH_2 &= \ran\big(Q^2 + I_{\cH_1} \oplus I_{\cH_2}\big)   \no \\
&= \ran((Q - iI_{\cH_1} \oplus I_{\cH_2})(Q + iI_{\cH_1} \oplus I_{\cH_2}))    \lb{2.5} \\
&= \ran((Q + iI_{\cH_1} \oplus I_{\cH_2})(Q - iI_{\cH_1} \oplus I_{\cH_2})),   \no 
\end{align}
and one concludes that 
\begin{equation}
\ran((Q \pm iI_{\cH_1} \oplus I_{\cH_2}) = \cH_1 \oplus \cH_2,     \lb{2.6} 
\end{equation}
and thus $Q$ is self-adjoint (and hence closed), implying closedness of $T$ by \eqref{2.1} and \eqref{2.3}. 
\end{proof}

Since $\dom\big(Q^2\big)$ is clearly a core for $Q$, this proof also yields the fact that 
$\dom(T^*T)$ and $\dom(TT^*)$ are cores for $T$ and $T^*$, respectively.

We also note that the factorization argument in \eqref{2.5} extends to higher integer powers than $2$ as discussed in \cite[Theorem~5.22]{We80}.  

For further results on operators of the type $T^*T$ when $T$ is not necessarily assumed to be 
closed, we refer to \cite{ST12}.

\section{On the Friedrichs Extension of $S^2$, with $S$ Symmetric} \lb{s3}

In this section  we reconsider  the Friedrichs extension of the nonnegative operator $S^2$, where $S$ is a symmetric operator on the complex Hilbert space $\cH$. 

After submitting our note to the archive, we were informed by Yu.\ Arlinski{\u i} that the results presented in this section are contained in two papers by him and Yu.\ Kovalev \cite[Theorem~1.1, Proposition~3.3]{AK11}, \cite[Theorems~1.1, 1.2]{AK13}. We decided to keep this section in this archive submission as the approach in Theorem \ref{t3.1} below differs a bit from that in \cite{AK11} and \cite{AK13}. However, we emphasize that papers \cite{AK11} and \cite{AK13} go far beyond the scope of this section, in particular, they also discuss the Krein--von Neumann extension in addition to the Friedrichs extension.

To begin our discussion we recall a special case of the 1st form representation theorem (cf., e.g., 
\cite[Theorem~IV2.4]{EE89}, \cite[Theorem~VI.2.1]{Ka80}, \cite[Theorems~VIII.15, VIII.16]{RS80}, 
\cite[Theorem~10.7]{Sc12}, \cite[Theorem~7.5.5, Proposition~7.5.6]{Si15}, \cite[Theorem~5.36]{We80} 
for the latter) in the following form: Let $T$ be a densely defined operator in $\cH$ and introduce the sesquilinear form $q_T$ in $\cH$ (actually, $\cH \times \cH$, but we will abuse notation a bit) via
\begin{equation}
q_T(f,g) = (Tf, Tg)_{\cH}, \quad f, g \in \dom(q_T) = \dom(T).    \lb{3.1}
\end{equation}
Then $q_T$ is closable (resp., closed) if and only if $T$ is closable (resp., closed). (See, e.g., 
\cite[Example~VI.2.13]{Ka80}). Moreover, if $T$ is closable, then the closure of $q_T$, denoted by 
$\ol{q_T}$, is then given by $q_{\ol T}$, that is, 
\begin{equation}
\ol{q_T}(f,g) = ({\ol T}f, {\ol T}g)_{\cH}, \quad f, g \in \dom\big(\ol{q_T}) = \dom\big({\ol T}\big).  \lb{3.2}
\end{equation}
In this case the nonnegative, self-adjoint operator uniquely associated with $\ol{q_T} = q_{\ol T}$ is given by 
\begin{equation}
T^* {\ol T} = \big({\ol T}\big)^* {\ol T} \geq 0.    \lb{3.3}
\end{equation} 

Given these preparations we can state the following result:

\begin{theorem} \lb{t3.1} 
Suppose $S$ is symmetric in $\cH$ such that $\dom\big(S^2\big)$ is dense in $\cH$. \\[1mm] 
$(i)$ The 
Friedrichs extension of $S^2$, denoted by $\big(S^2\big)_F$, is given by 
\begin{equation}
\big(S^2\big)_F = \Big(\ol{S \big|_{\dom(S^2)}}\Big)^* \ol{S \big|_{\dom(S^2)}} \geq 0.    \lb{3.4}
\end{equation}
In general,  $\big(S^2\big)_F$ differs  from $S^* \ol{S}=\big(\ol{S}\big)^* \ol{S}$. \\[1mm] 
$(ii)$ If one of the deficiency indices of $\ol S$ is finite, then 
 $\big(S^2\big)_F=S^* \ol{S}$, or, equivalently, $\dom\big(S^2\big)$ is a core for $S$.  
 \end{theorem}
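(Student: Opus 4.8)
The plan is to apply the first form representation theorem recalled in \eqref{3.1}--\eqref{3.3} to the operator $T = S\big|_{\dom(S^2)}$, and to identify the form closure with the form defining the Friedrichs extension of $S^2$. First I would observe that since $S$ is symmetric and $\dom\big(S^2\big)$ is assumed dense, the restriction $T := S\big|_{\dom(S^2)}$ is a densely defined symmetric operator, hence closable; let $\ol{T} = \ol{S\big|_{\dom(S^2)}}$ denote its closure. By \eqref{3.3}, the nonnegative self-adjoint operator associated with the closed form $q_{\ol T}(f,g) = \big(\ol{T}f, \ol{T}g\big)_{\cH}$ is precisely $\big(\ol{T}\big)^*\ol{T} = T^*\ol{T}$. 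So it remains to show that this self-adjoint operator \emph{is} the Friedrichs extension of $S^2$.

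The key step is therefore to verify that $q_{\ol T}$ coincides with the form closure of the form $q_{S^2}$ naturally attached to the nonnegative symmetric operator $S^2$, namely $g_{S^2}(f,h) = \big(S^2 f, h\big)_{\cH} = (Sf, Sh)_{\cH}$ for $f, h \in \dom\big(S^2\big)$ — the last equality using symmetry of $S$ together with $f, h, Sf, Sh$ all lying where the pairing makes sense (here one needs $Sf, Sh \in \dom(S)$, which holds exactly because $f, h \in \dom(S^2)$). Thus on the form core $\dom\big(S^2\big)$ we have $g_{S^2}(f,h) = (Tf, Th)_{\cH} = q_T(f,h)$, and since the Friedrichs extension of $S^2$ is by definition the self-adjoint operator associated with the closure of $g_{S^2}$, while $T^*\ol T$ is associated with the closure $\ol{q_T} = \ol{g_{S^2}}$, uniqueness in the representation theorem gives \eqref{3.4}. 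For the remark that $\big(S^2\big)_F$ differs in general from $S^*\ol S = \big(\ol S\big)^*\ol S$, I would exhibit (or cite) a standard example: $S^*\ol S$ is the self-adjoint operator attached to the form $f \mapsto \|\ol S f\|^2$ on $\dom(\ol S)$, which has the larger form domain $\dom(\ol S) \supseteq \dom(\ol T)$, and the inclusion is strict precisely when $\dom\big(S^2\big)$ fails to be a core for $\ol S$; a concrete $S$ (e.g.\ $S = i\,d/dx$ on a suitable interval with minimal-type domain, or a Jacobi-type operator) makes this explicit.

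For part $(ii)$, the plan is to show that finiteness of one deficiency index of $\ol S$ forces $\dom\big(S^2\big)$ to be a core for $\ol S$, which by the equivalence just established (strictness of $\dom(\ol T)\subsetneq\dom(\ol S)$ $\iff$ $\big(S^2\big)_F \neq S^*\ol S$) immediately yields $\big(S^2\big)_F = S^*\ol S$. Here I would argue as follows: $\dom(\ol S)$ decomposes, via von Neumann's formulas relative to $\ol T$ or a direct graph-orthogonality argument, and the failure of $\dom\big(S^2\big)$ to be a core for $\ol S$ would produce a nonzero vector in a deficiency-type subspace orthogonal (in the graph inner product of $\ol S$) to the graph of $\ol T$; one then shows this subspace injects into the deficiency subspaces of $\ol S$ associated with $\pm i$, and that if it were nontrivial it would in fact have to be infinite-dimensional — contradicting the finiteness hypothesis. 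Concretely, if $\varphi \perp \dom(S^2)$ in the graph norm of $\ol S$, then $\big(I + (\ol S)^*\ol S\big)$-type considerations, or the identity $(\ol S \pm i)\varphi \perp \ran\big((\ol S \mp i)\big|_{\dom(S^2)}\big)$, feed such $\varphi$ into $\ker\big((\ol S)^* \mp i\big)$; the dimension count then comes from the fact that $S^2$ on $\dom(S^2)$, being nonnegative, has equal and the relevant excess of $\ol S$ over $T$ is governed by a difference of deficiency indices that must vanish once one side is finite.

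\textbf{Main obstacle.} The routine half — the form-theoretic identification in $(i)$ — is essentially bookkeeping with \eqref{3.1}--\eqref{3.3}. The genuine difficulty is the dimension-counting argument in $(ii)$: one must show carefully that the ``defect'' of $\dom\big(S^2\big)$ inside a core of $\ol S$ is controlled by deficiency indices of $\ol S$ and that finiteness of one of them kills this defect entirely. I expect this to require a precise von Neumann-type decomposition of $\dom(\ol S)$ adapted to the symmetric operator $T = S\big|_{\dom(S^2)}$ (whose closure $\ol T$ need \emph{not} equal $\ol S$), and tracking how the two pairs of deficiency subspaces — those of $\ol S$ and those of $\ol T$ — are related; getting the finiteness to propagate correctly is the crux.
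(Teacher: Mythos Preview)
Your treatment of part $(i)$ is essentially the paper's argument: identify the Friedrichs form of $S^2$ with $q_T$ for $T=S\big|_{\dom(S^2)}$, close, and invoke \eqref{3.1}--\eqref{3.3}. The equivalence ``$\big(S^2\big)_F=S^*\ol S$ iff $\dom(S^2)$ is a core for $S$'' is also handled the same way (the paper phrases the converse via $|R|=|\ol S|$, which is exactly your form-domain observation). One correction, however: your proposed concrete witness for $\big(S^2\big)_F\neq S^*\ol S$ --- ``$S=i\,d/dx$ on a suitable interval with minimal-type domain'' --- cannot work. Any such $S$ has deficiency indices in $\{0,1\}$, hence finite, so part $(ii)$ of the very theorem you are proving forces $\big(S^2\big)_F=S^*\ol S$ in that example. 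A genuine counterexample must have \emph{both} deficiency indices infinite; the paper does not construct one but cites \cite[Corollary~4.3]{Sc83}.

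For part $(ii)$ there is a real gap. Your sketch (graph-orthogonality of $\varphi$ to $\dom(S^2)$, then ``feeding $\varphi$ into $\ker((\ol S)^*\mp i)$'', then a dimension count) never becomes an argument: the crucial step --- why finiteness of \emph{one} deficiency index of $\ol S$ forces the graph-orthogonal complement of $\dom(S^2)$ in $\dom(\ol S)$ to be zero --- is asserted but not shown, and your sentence about ``$S^2$ \dots\ has equal and the relevant excess of $\ol S$ over $T$'' is unfinished. Note also that $\varphi\in\dom(\ol S)$ satisfies $(\ol S\pm i)\varphi\in\ran(\ol S\pm i)$, which is \emph{orthogonal} to the deficiency space $\ker(S^*\mp i)$, so ``feeding $\varphi$ into $\ker((\ol S)^*\mp i)$'' cannot be literal; some further mechanism is needed. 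The paper proceeds quite differently: it passes to the Cayley transform $V_T$ of the closed operator $T=\ol S$, proves (Proposition~\ref{p3.2}) that $\dom(T^2)=(I-V_T)^2(I-P)\cH$ where $P$ projects onto $\ol{\cH_+ + V_T^*\cH_+}$, characterizes the core property by a condition on $(I-V_T^*)h\in P\cH$, and then observes that finiteness of one deficiency index makes $\cH_+ + V_T^*\cH_+$ finite-dimensional, hence closed, which suffices. Your graph-norm idea is morally the Hilbert-space picture of the same computation (the map $f\mapsto (T+i)f$ is the isometry underlying the Cayley transform), but without the explicit identification of $\dom(T^2)$ in the Cayley picture you do not have the closedness-of-a-finite-dimensional-sum step that actually makes the finiteness hypothesis bite.
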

\begin{proof}
$(i)$ Since $\dom\big(S^2\big)$ is dense in $\cH$, $S \big|_{\dom(S^2)}$ is closable and hence so is the form 
\begin{equation}
Q_{S^2} (f,g) = \big(f, S^2 g\big)_{\cH}, \quad f,g \in \dom(Q_{S^2}) = \dom\big(S^2\big).    \lb{3.5}
\end{equation}
Rewriting $Q_{S^2}$ as
\begin{align}
\begin{split} 
& Q_{S^2} (f,g) = \big(S f, S g\big)_{\cH} = \big(S \big|_{\dom(S^2)} f, S \big|_{\dom(S^2)} g\big)_{\cH},  \\
& f,g \in \dom(Q_{S^2}) = \dom\big(S^2\big),    \lb{3.6}
\end{split} 
\end{align}
an application of \eqref{3.1}, \eqref{3.2} then yields for the closure $\ol{Q_{S^2}}$ of $Q_{S^2}$,
\begin{equation}
\ol{Q_{S^2}} (f,g) = \big(\ol{S \big|_{\dom(S^2)}} f, \ol{S \big|_{\dom(S^2)}} g\big)_{\cH}, 
\quad f,g \in \dom(Q_{S^2}) = \dom\big(S^2\big).     \lb{3.7}
\end{equation} 
By its definition (see, e.g., \cite[p.~325--326]{Ka80}, \cite[p.~177--181]{RS75}, \cite[p.~234--241]{Sc12}) 
the Friedrichs extension of $S^2$ is the nonnegative self-adjoint operator associated with the form $\ol{Q_{S^2}}$\,. Therefore,  \eqref{3.4} follows by combining \eqref{3.7} and  \eqref{3.3}.

Next, set $R:=\ol{S \big|_{\dom(S^2)}}$. Then $(S^2\big)_F=R^*R$ by \eqref{3.4}. If $\dom\big(S^2)$ 
is a core for $S$, then  $R=\ol{S}$ and hence $\big(S^2\big)_F=S^* \ol{S}$. Conversely, suppose that 
$\big(S^2\big)_F\equiv R^*R=S^* \ol{S}$. Then $|R|= \big|\ol{S}\big|$ and hence 
$\dom(R)=\dom(|R|) = \dom \big(\big|\ol{S}\big|\big) = \dom\big(\ol{S}\big)$. Since 
$R \subseteq \ol{S}$, this yields $R=\ol{S}$. Thus, $\big(S^2\big)_F=S^* \ol{S}$\, if and only if  
$\dom\big(S^2\big)$ is a core for $S$.

The existence of a symmetric operator $S$ for which $\dom\big(S^2\big)$ is dense, but not a 
core for $S$, follows from \cite[Corollary 4.3]{Sc83}. \\[1mm] 
\noindent  
$(ii)$ If $S$ is closed and has at least one finite deficiency index, then by Proposition \ref{p3.2}\,$(iii)$ below, $\dom\big(S^2\big)$ is a  core for $S$ and therefore $\big(S^2\big)_F=S^* \ol{S}$.
\end{proof}

Theorem \ref{t3.1}\,$(i)$ shows that the second Corollary in \cite[p.~181]{RS75} needs to be revised accordingly. 

In the remaining part of this section we use the Cayley transform of $T$ to investigate  when 
$\dom\big(T^2\big)$ is a core for $T$. 

Let $T$ be a closed Hermitian operator on $\cH$. Then $(T \pm i I_{\cH})\dom(T)$ are closed linear subspaces of $\cH$. Their orthogonal complements $\cH_\pm:=((T \mp i I_{\cH})\dom (T))^\bot$ are the deficiency subspaces of $T$. Next, we recall that the Cayley transform $V_T$ of $T$ is the partial isometry 
mapping $(T + i I_{\cH})\dom (T)$ to $(T - i I_{\cH})\dom (T)$ via
\begin{equation} 
V_T (T + i I_{\cH})f=(T - i I_{\cH})f,\quad f\in \dom (T).
\end{equation} 
Thus, $\dom (T) = (I_{\cH} - V_T)(I_{\cH} - P_+)\cH,$  where $P_+$ is the orthogonal  projection onto 
$\cH_+$. We extend $V_T$ to an operator on the entire Hilbert space $\cH$ by setting $V_T f = 0$ for 
$f \in \cH_+$. Let $P$ be the orthogonal projection onto the closure of the linear subspace 
$\cH_+ + V_T^*\cH_+$ of $\cH$. (Here $X+Y = \{z = x + y \,|\, x \in X, y \in Y\}$, with $X, Y$ linear subspaces of $\cH$.) We note that in general $\cH_+ + V_T^*\cH_+$ is different from 
$(I_{\cH_+} + V_T^*)\cH_+$.

The following proposition is an adaption of some consideration from \cite{Sc83}. 

\begin{proposition}\label{p3.2} Suppose that $T$ is a closed Hermitian operator on $\cH$. \\[1mm] 
$(i)$ $\dom \big(T^2\big) = (I_{\cH} - V_T)^2(I_{\cH} - P)\cH$. \\[1mm] 
$(ii)$ $\dom \big(T^2\big)$ is a core for $T$ if and only if $(I_{\cH} - V_T^*)h\in P \cH$ for 
$h\in (I_{\cH} - P_+)\cH$ implies that $h=0$. \\[1mm] 
$(iii)$ Suppose, in addition, that $T$ is densely defined in $\cH$. If $\cH_+ +V_T^*\cH_+$ is closed, or, if at least one of the deficiency indices of $T$ is finite, then $\dom \big(T^2\big)$ is a core for $T$.
\end{proposition}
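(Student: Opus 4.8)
The plan is to work entirely through the Cayley transform $V:=V_T$, extended by zero on $\cH_+$ as in the text, and translate the statement ``$\dom(T^2)$ is a core for $T$'' into a statement about the ranges of powers of $(I-V)$. First I would record the description of $\dom(T)$: by definition $\dom(T)=(I-V)(I-P_+)\cH$, and the graph norm on $\dom(T)$ is comparable to the norm that $(I-V)$ transports from $(I-P_+)\cH$, since $Tf = -i(I+V)(I-V)^{-1}f$ on $\dom(T)$ and $(I-V)$ is injective on $(I-P_+)\cH$ (an isometry being at most $1$ in norm, $\ker(I-V)\cap(I-P_+)\cH=\{0\}$ because $T$ has no real eigenvalues off... more carefully, $Vg=g$ with $g\perp\cH_+$ forces $g=0$ since $T$ is Hermitian). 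So a subspace $\cD\subseteq\dom(T)$ is a core for $T$ iff $(I-V)^{-1}\cD$ is dense in $(I-P_+)\cH$ in the graph-adjusted norm, which here reduces to density in the Hilbert norm because $(I-V)^{-1}$ carries graph-closeness to ordinary closeness on the appropriate subspace.

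Next I would establish part $(i)$. The elements of $\dom(T^2)$ are those $f\in\dom(T)$ with $Tf\in\dom(T)$. Writing $f=(I-V)(I-P_+)\xi$ and computing $Tf$ through the Cayley formula, the condition $Tf\in\dom(T)=(I-V)(I-P_+)\cH$ should, after simplification, become the requirement that $(I-P_+)\xi$, or rather $\xi$ itself after absorbing the first factor, lie in $(I-P)\cH$ — this is precisely where the subspace $\cH_+ + V^*\cH_+$ and its closure's projection $P$ enter, because ``$Tf\in\dom(T)$'' is an orthogonality condition against the deficiency space that, pulled back through $(I-V)$, involves $V^*\cH_+$ alongside $\cH_+$. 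Carrying the bookkeeping through yields $\dom(T^2)=(I-V)^2(I-P)\cH$. This is the computational heart of the proposition and I expect it to be the main obstacle: one must be careful that $(I-V)$ maps $(I-P)\cH$ into $(I-P_+)\cH$ (needed so that a second application of $(I-V)$ is legitimate and lands back in $\dom(T)$), and that no elements are lost. I would verify $P_+\le P$ (equivalently $\cH_+\subseteq \cH_+ + V^*\cH_+$, which is trivial) so that $(I-P)\cH\subseteq(I-P_+)\cH$, making the iterated expression meaningful.

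Given $(i)$, part $(ii)$ is essentially linear algebra about denseness. By the core criterion from the first paragraph, $\dom(T^2)$ is a core for $T$ iff $(I-V)^{-1}\dom(T^2) = (I-V)(I-P)\cH$ is dense in $(I-P_+)\cH$. Its orthogonal complement within $(I-P_+)\cH$ consists of those $h\in(I-P_+)\cH$ with $\big((I-V)(I-P)\eta, h\big)=0$ for all $\eta$, i.e. $(I-P)(I-V^*)h=0$, i.e. $(I-V^*)h\in P\cH$. So $\dom(T^2)$ is a core iff the only such $h$ is $h=0$, which is exactly the stated condition. Here I must be slightly careful that closedness of $(I-V)(I-P)\cH$ inside $(I-P_+)\cH$ in the graph norm is the same as closedness in the Hilbert-space norm; this follows because on $(I-P_+)\cH$ the graph norm $\|\xi\|^2+\|T(I-V)\xi\|^2$ and $\|\xi\|^2$ are not equivalent in general, so one actually works with the closure in the graph norm — but $(I-V)(I-P)\cH$ is already $(I-V)^{-1}$ of a subspace of $\dom(T)$, and density of $\dom(T^2)$ in $(\dom(T),\|\cdot\|_T)$ unravels cleanly to the orthogonality computation above since $(I-V)^{-1}$ is bounded from $(\dom(T),\|\cdot\|_T)$ with closed graph. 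I would phrase this carefully to avoid circularity.

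Finally, part $(iii)$: if $\cH_+ + V^*\cH_+$ is already closed then $P$ is the projection onto it, and I claim the implication in $(ii)$ holds. Suppose $h\in(I-P_+)\cH$ with $(I-V^*)h\in P\cH=\cH_+ + V^*\cH_+$. Since $h\perp\cH_+$ and $V^*h\in V^*\cH$... I would decompose: write $(I-V^*)h = u + V^*v$ with $u,v\in\cH_+$; using $h\perp\cH_+$ and the fact that $V^*$ maps $\cH$ into $(I-P_+)\cH$'s... actually into $\ker(V^*)^\perp$, I pair against suitable vectors to force $u=0$ and $h=v$, but $h\perp\cH_+\ni v$ gives $v=0$, hence $h=0$. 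For the finite-deficiency-index case I would reduce to the closed case by noting $V^*\cH_+$ is finite-dimensional (if $\dim\cH_+<\infty$) or $\cH_+$ is finite-dimensional (if $\dim\cH_-=\dim V^*\cH_+<\infty$ — since $V^*$ restricted to $\cH_-^{\perp}$... rather, $V$ is an isometry from $(I-P_+)\cH$ onto $(I-P_-)\cH$ so $V^*$ is an isometry from $(I-P_-)\cH$ onto $(I-P_+)\cH$ and maps $\cH_+\subseteq(I-P_-)\cH$ isometrically, giving $\dim V^*\cH_+=\dim\cH_+$). Either way $\cH_+ + V^*\cH_+$ is a sum of a closed subspace and a finite-dimensional one, hence closed, reducing $(iii)$ to its first clause. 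The density-in-the-graph-norm bookkeeping of step $(ii)$ is where I would be most careful in the write-up, but the genuinely new computation is the identity in $(i)$.
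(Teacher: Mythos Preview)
Your overall architecture matches the paper's: Cayley transform, the identity in $(i)$, the orthogonal-complement computation for $(ii)$, and the decomposition argument for $(iii)$. Two points deserve tightening, and one step has a genuine error.

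For $(ii)$, your worry about graph-norm versus Hilbert-norm density is exactly the right thing to flag, but your resolution is vague. The paper's device is cleaner and you should adopt it: since $T$ is Hermitian, $\|(T+iI_{\cH})f\|_{\cH}^2=\|Tf\|_{\cH}^2+\|f\|_{\cH}^2$, so $(\dom(T),\|\cdot\|_T)$ with inner product $((T+iI_{\cH})\cdot,(T+iI_{\cH})\cdot)_{\cH}$ is a Hilbert space $\cK$, and $(T+iI_{\cH})$ (equivalently $2i(I-V)^{-1}$) is a unitary from $\cK$ onto $(I-P_+)\cH$. Then ``core'' becomes ordinary density in $(I-P_+)\cH$, and your orthogonal-complement computation $(I-P)(I-V^*)h=0$ goes through without further fuss. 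The paper computes this directly as $((T+iI_{\cH})(I-V)^2f,(T+iI_{\cH})g)_{\cH}=4(f,(I-V^*)h)_{\cH}$.

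For the closed case of $(iii)$, your decomposition $u+V^*v$ is workable but the sketch ``force $u=0$ and $h=v$'' is not how it actually unwinds. The paper instead writes the element of $\cH_++V^*\cH_+$ as $(I-V^*)f_1+f_2$ with $f_1,f_2\in\cH_+$, so that $(I-V^*)(h-f_1)=f_2\in\cH_+$; pairing against $(I-V)(I-P_+)g$ then gives $(h-f_1)\perp\dom(T)$, and density of $\dom(T)$ forces $h=f_1\in\cH_+$, hence $h=0$. In your parametrization the conclusion would be $h=-v$, not $h=v$.

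The genuine error is in the finite-deficiency case. Your claim that $\cH_+\subseteq(I-P_-)\cH$, i.e.\ $\cH_+\perp\cH_-$, is false in general (already for deficiency indices $(1,1)$ the two one-dimensional deficiency spaces need not be orthogonal), so your route to $\dim V^*\cH_+=\dim\cH_+$ does not work when only $\dim\cH_-<\infty$ is assumed. The paper's fix is simply to replace $T$ by $-T$, which swaps $\cH_+$ and $\cH_-$ while leaving $\dom(T^2)$ and the core property unchanged; one may then assume $\dim\cH_+<\infty$, whence $\cH_++V^*\cH_+$ is finite-dimensional and closed.
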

\begin{proof}
$(i)$ Let\, $f\in \dom \big(T^2\big)$. Then\, $f\in \dom (T)$, so that $f=(I_{\cH} - V_T)g$,\, where\, 
$g\in(I_{\cH} - P_+)\cH$. Furthermore, 
$(T + i I_{\cH})f= 2ig\in \dom (T)$ implies that $g=(I_{\cH} - V_T)h $ with\,  $h \in(I_{\cH} - P_+) \cH$. 
Since $h,g \perp \cH_+$, one obtains $g = (I_{\cH} - V_T)h\perp \cH_+$. Hence we conclude that 
$h \perp \cH_+$ and $h \perp V_T^* \cH_+$, that is, $h \perp P \cH$. Thus, 
$f=(I_{\cH} - V_T)g = (I_{\cH} - V_T)^2 h \in(I_{\cH} - V_T)^2(I_{\cH} - P)\cH$. Thus, 
$\dom \big(T^2\big) \subseteq (I_{\cH} - V_T)^2(I_{\cH} - P)\cH.$
Reversing this reasoning one obtains the converse inclusion. \\[1mm]  
\noindent 
$(ii)$ Since $T$ is symmetric,  one has $\|(T + i I_{\cH})f\|_{\cH}^2 = \|Tf\|_{\cH}^2+\|f\|_{\cH}^2$ for 
$f\in \dom (T)$. Since $T$ is closed, 
$\cK:=(\dom (T), (\, \cdot \,, \, \cdot \,)_{\cK} := ((T + i I_{\cH}) \, \cdot \,,(T + i I_{\cH}) \, \cdot \,)_{\cH})$ is a Hilbert space.  Therefore, $\dom \big(T^2\big)$ is a core for $T$ if and only if 
the orthogonal complement of $\dom \big(T^2\big)$ in $\cK$ is trivial. Fix $g=(I_{\cH} - V_T)h $, where 
$h\in(I_{\cH} - P_+) \cH$,  in $\cK$. By item $(i)$, $\dom \big(T^2\big)$ is the set of vectors 
$(I_{\cH} - V_T)^2f$ with $f\in (I_{\cH} - P)\cH$. Hence, $g\in \big(\dom \big(T^2\big)\big)^\bot$ in $\cK$ 
if and only if for all $f\in (I_{\cH} - P)\cH$ one has
\begin{align}
\begin{split} 
0 &= ((T + i I_{\cH})(I_{\cH} - V_T)^2f, (T + i I_{\cH})g)_{\cH}=4 ((I_{\cH} - V_T)f,h)_{\cH}   \\
&= 4 (f,(I_{\cH} - V_T^*)h)_{\cH},
\end{split} 
\end{align}
or, equivalently,  $(I_{\cH} - V_T^*)h\in P \cH$. Since $g=0$ if and only if $h=0$ (by 
$(T + i I_{\cH}) g=2ih$), one concludes that $\big(\dom \big(T^2\big)\big)^\bot =\{0\}$ in $\cK$ if and only if the condition stated in item $(ii)$ holds. \\[1mm]  
\noindent 
$(iii)$ First we suppose that $\cH_+ +V_T^*\cH_+$ is closed. Assume that $(I_{\cH} - V_T^*)h\in P \cH$ for some $h\in (I_{\cH} - P_+)\cH$. Then can write $(I_{\cH} - V_T^*)h=(I_{\cH} - V_T^*)f_1+f_2$ with 
$f_1,f_2\in \cH_+$. For $g\in \cH$, one has
\begin{align}
\begin{split} 
((h-f_1), (I_{\cH} - V_T)(I-P_+)g)_{\cH} & = ((I_{\cH} - V_T^*)(h-f_1),(I-P_+)g)_{\cH}    \\
&= ( f_2,(I-P_+)g)_{\cH} = 0. 
\end{split} 
\end{align}
This shows that $(h-f_1) \bot \dom (T)$. Since $\dom (T)$ is dense,  $h=f_1\in \cH_+$. Combined with 
$h\in (I_{\cH} - P_+)\cH$, the latter yields $h=0$. Thus, the condition in item $(ii)$ is fulfilled and 
$\dom \big(T^2\big)$ is a core for $T$.

Next, suppose that $T$ has at least one finite deficiency index. Upon replacing $T$ by $-T$, if necessary, one can assume without loss of generality that $\dim (\cH_+) < \infty$. Then $\cH_+ + V_T^*\cH_+$ is 
finite-dimensional and hence closed.
\end{proof}

\begin{remark} \lb{r3.3}
We emphasize once more that the results presented in this section are contained in two papers by 
Yu.\ Arlinski{\u i} and Yu.\ Kovalev \cite[Theorem~1.1, Proposition~3.3]{AK11}, 
\cite[Theorems~1.1, 1.2]{AK13}. These references contain many additional results, including a discussion of the Krein--von Neumann extension. 
\end{remark}


\medskip

\noindent
{\bf Acknowledgments.} We are indebted to Yury Arlinski{\u i} for kindly pointing out to us his papers with Yury Kovalev \cite{AK11}, \cite{AK13}, which contain the results presented in Section \ref{s3}. In addition, we are grateful to Maxim Zinchenko for very helpful comments. 
 
 
\end{document}